\newtheorem{theorem}{Theorem}[section]
\newtheorem{proposition}[theorem]{Proposition}
\newtheorem{remark}[theorem]{Remark}
\newtheorem{definition}[theorem]{Definition}
\numberwithin{equation}{section}
\newcommand{\R}{\mathbb{R}}
\newcommand{\ed}{\end{document}}
\newcommand{\ee}{\end{equation}}
\begin{document}

\title{Duality methods for a class of quasilinear systems}
\author{Antonella Marini$^1$ and
Thomas H. Otway$^2$
\\ \\
\textit{$^1$Dipartimento di Matematica, Universit\`{a} di
L'Aquila,}\\ \textit{67100 L'Aquila, Italy } \\ \textit{$^{1,2}$Department of Mathematical Sciences,
Yeshiva University, }\\ \textit{New York, New York 10033}}
\date{}
\maketitle

\begin{abstract}

 Duality methods are used to generate explicit solutions to nonlinear Hodge systems, demonstrate the well-posedness of boundary value problems,  and  reveal, via the Hodge--B\"acklund transformation, underlying symmetries among superficially different forms of the equations. \textit{MSC2010}: 58A14, 58A15, 35J47, 35J62, 35M10.

\noindent\emph{Key words}: Hodge--Frobenius equations; Hodge--B\"acklund transformations; nonlinear Hodge theory; $A$-harmonic forms
\end{abstract}

\section{Introduction}
After well over a half-century, the equations of Hodge and Kodaira remain a fruitful approach to the theory of irrotational fields, which they endow with the rich topological structure of de Rahm cohomology. See, \emph{e.g.}, Ch.\ 7 of \cite{Mr}, or \cite{Sch}, for introductions. A solution to the Hodge--Kodaira equations is a $k$-form $\omega$ which is \emph{closed} ($d\omega=0$) and \emph{co-closed} ($\delta\omega=0$) under the exterior derivative $d,$ where $\delta$ is its formal adjoint.

Most of the interesting classical fields are quasilinear. The nonlinear Hodge theory conjectured by Bers and realized by Sibner and Sibner \cite{SS1} introduces Hodge-like equations which model irrotational velocity fields associated with steady, ideal compressible flow. In that extension, the requirement of classical Hodge theory that the solution $\omega$ be co-closed under exterior differentiation is weakened to the requirement that only the product of $\omega$ and a possibly nonlinear term $\rho$ must have this property.

Classical fields are frequently characterized by vortices. So although most conservative field theories are quasilinear, most quasilinear field theories are not conservative (even locally), and it is worthwhile to study the analytic properties of equations in which the requirement that the solution be closed under exterior differentiation is also weakened. Thus in a recent paper \cite{MO1} we studied the invariantly defined system (\cite{O1}, Sec. VI; \cite{OPad}, Sec.\ 4)
\begin{equation}\label{HF1-2}
\left\{\begin{array}{ll}
    \delta\left(\rho(Q) \omega\right)=0\\
    d\omega = \Gamma\wedge \omega
    \end{array}\right.
\end{equation}
for unknown $\omega\in \Lambda^k(\Omega),$ $k\in\mathbb{Z}^+,$ with $\Omega$ a smooth open domain in $\R^n$,
and continuously differentiable $\Gamma\in \Lambda^1(\Omega).$ Here $Q=\vert \omega\vert^2=\ast\left(\omega\wedge\ast\omega\right),$ with $\ast$ denoting the Hodge duality operator $\ast:\Lambda^k(\Omega)\rightarrow\Lambda^{n-k}(\Omega);$  $\rho$ is a positive, H\"older-continuously differentiable
function of $Q,$ which is generally given by the physical or geometric context. We call (\ref{HF1-2})
the \emph{nonlinear Hodge--Frobenius equations}, as they generalize the nonlinear
Hodge equations
\begin{equation}\label{NLHe}
\left\{\begin{array}{ll} \delta\left(\rho(Q) \omega\right)=0\\
d\omega=0
\end{array}\right.
\end{equation}
introduced in \cite{SS1}.
In this paper we study (\ref{HF1-2}) and also variants in which the term $\Gamma$ in the second equation --
the \emph{Frobenius condition} --
may depend on $\omega,$ or in which the co-differential equation assumes a special inhomogeneous form and $\rho=\rho(\mathbf x, Q)$ may depend explicitly on $\mathbf x\in\Omega$.

The Frobenius condition represents a weakening of the local conservation hypothesis $d\omega=0$ in system  (\ref{NLHe}). The resulting field is no longer locally conservative, but generates a closed ideal. For this reason, it is completely integrable (in the sense of Frobenius) for forms of degree or co-degree equal to 1, or for general $k$ under the additional hypothesis that $\Gamma$ be exact; see, \emph{e.g.}, \cite{E}, Sec.\ 4-2. The hypothesis that $\Gamma$ be exact is automatically satisfied in the case $k=1$ or $k=n-1.$  If $\Gamma$ is exact,  say  $\Gamma = d\eta$ for $\eta\in\Lambda^0(\Omega),$ solutions to eqs.\ (\ref{HF1-2}) are locally exact when multiplied by an integrating factor; that is, they  have the local  form
$$e^\eta\omega = d\Psi$$
for $\Psi\in \Lambda^{k-1}(\Omega);$ see the discussions in Secs.\ 2.1 and 2.2 of \cite{MO1} and in Sec.\ 1 of \cite{MO2}.
For many applications the weaker condition
\begin{equation} \label{Hcon}
\omega\wedge d\omega=0
\end{equation}
suffices in place of the Frobenius condition; see, \emph{e.g.}, Sec.\ 1.2 of \cite{MO1}. In cases for which the Frobenius condition is used only to imply (\ref{Hcon}), or for cases in which it is interpreted as a condition for an integrating factor, the 1-form $\Gamma$ need not be prescribed: any nonsingular $\Gamma$ will do.

Diverse choices of the mass density $\rho$ arise in models of classical fields. These models are reviewed in \cite{O4}, Sec. 2.7 and Chs.\ 5 and 6. Most classical fields which satisfy quasilinear partial differential equations are vectorial, and these vectorial solutions correspond via isomorphism to 1-form solutions of  \eqref{HF1-2} or \eqref{NLHe}. But occasionally there are matrix-valued solutions of quasilinear field equations, and some of these correspond to 2-form-valued solutions of the nonlinear Hodge or Hodge--Frobenius equations. Examples of equations having 1-form solutions include the continuity equations for the velocity field of a steady, compressible fluid flow \cite{SS1} and for certain models of shallow hydrodynamic flow \cite{O4}. Examples of equations having 2-form solutions include nonlinear Maxwell's equations for electromagnetic fields \cite{MP}, Born--Infeld  fields \cite{Y}, and certain twisted variants of these \cite{O1}, \cite{SSY}. The variety of applications discussed in \cite{O4} and the references cited therein suggest that eqs. \eqref{HF1-2} and \eqref{NLHe} are rather generic: they apply, under various additional hypotheses, to a wide variety of models. For this reason, it is worthwhile to study their analytic properties, as we do here and in \cite{MO1}, without focusing on any particular application.

\subsection{Organization of the paper}
In Sec. \ref{S-Aharmonic}  we derive the existence of solutions to a Hodge--Frobenius system, in which the solution is co-closed and the Frobenius condition is nonlinear, from the existence of an appropriate class of $A$-harmonic forms.

In Sec. \ref{S-construct} we give an algebraic criterion for inverting the operator $A$. That criterion can be applied also to the hyperbolic range of the corresponding  nonlinear Hodge-Frobenius system. We use this inversion to write an explicit formula for the solutions to the system and  generate a concrete example.

In Sec.  \ref{S-HB} we show that certain superficially different models for classical fields can be shown to be Hodge--B\"acklund transforms of each other. In that section we transform different types of nonlinear Hodge--Frobenius systems, including a variational form of these systems,
 into nonlinear Hodge systems (\ref{NLHe}) of particular type.

 In Sec.  \ref{S-BVP} we prove the existence and uniqueness of solutions  to boundary value problems of Dirichlet and Neumann type  in the elliptic regime, for inhomogeneous nonlinear Hodge--Frobenius systems in which the $1$-form $\Gamma$ in the Frobenius condition is exact. We do so for both linear and nonlinear Frobenius conditions. The results of Sec. 5 are  an application of the results obtained in Sec. \ref{S-HB} and of known results for the conventional nonlinear Hodge equations (\ref{NLHe}) in the elliptic range.

\section{Relation to $A$-harmonic forms}\label{S-Aharmonic}

It was observed in Sec.\ 1 that the Frobenius condition emerges as a natural weakening of the conservation hypothesis $d\omega = 0$.
But the Hodge--Frobenius equations also arise naturally from the nonlinear Hodge equations (\ref{NLHe}) in a completely different way, as a dual, or conjugate form of the equations. The use of conjugate forms in nonlinear Hodge theory goes back at least to \cite{SS1}.
Dirichlet and Neuman problems for eqs.\ (\ref{NLHe}) were introduced in \cite{SS2}.

If $u\in\Lambda^{k-1}$ and $v\in\Lambda^{k+1},$ then the Cauchy--Riemann
equations can be written in the form $du=\delta v.$ More generally,
we may consider $A$-harmonic extensions.
We call the differential forms $u\in\Lambda^{k-1}$ and $v\in\Lambda^{k+1}$ \emph{ conjugate A-harmonic forms} if they satisfy the equation
\begin{equation}\label{CRgen}
A\left(x,du\right)=\delta v\,,
\end{equation}
where $A\,:\, \Omega\times \Lambda^{k}(\Omega)\to\Lambda^k(\Omega)$ is a differential operator of order $0$ and $\Omega$ is a domain of $\mathbb{R}^n$; see,
\emph{e.g.}, \cite{AD} for an exposition and \cite{ISS} for analytic properties.

 We specify $A$ to be given by
\begin{equation}\label{Adef}
A\left(x,\omega\right) = A(\omega)=\rho(|\omega|^2)\omega,\quad \omega\in \Lambda^k(\Omega)\,,
\end{equation}
and impose further conditions on $A$ or $\Omega$ as we require them.
Our immediate goal is to define Hodge--Frobenius fields in terms of conjugate $A$-harmonic $k$-forms.
We say that A is \emph{invertible} if
there exists an operator $B\,:\, \Omega\times \Lambda^k(\Omega)\to\Lambda^{k}(\Omega)$ such that
$$\begin{array}{ll}
B\left(x, A(x,\omega)\right) = \tilde\omega \,,\notag\\
A\left(x, B(x,\tilde\omega)\right)=\omega \quad\forall \omega,\, \tilde\omega\in\Lambda^{k}(\Omega)\,.\notag\\
\end{array}$$
Associated to $A$  is the differential operator $\tilde A$ of order 1,
$
\tilde A\,:\, \Omega\times \Lambda^{k-1}(\Omega)\to\Lambda^k(\Omega),$
$(x, u)\to A(x, du)$,
and the second order differential equation
$\delta \tilde A(x, u) = 0$ (with its inhomogeneous variants),
of which the co-differential equation in \eqref{HF1-2}, in the special case of $\omega$ exact  and $\rho(Q) = Q^{p/2},$ is the $p$-harmonic equation.
\begin{proposition} \label{Aharm} Let $A$ be given by (\ref{Adef}), with $\rho$ sufficiently smooth and positive.
 Assume $A$ to be invertible. Let $u\in\Lambda^{k-1}(\Omega)$ and $v\in\Lambda^{k+1}(\Omega)$ be sufficiently smooth, conjugate $A$-harmonic forms.
 Then $\tilde\omega\equiv \delta v=A(du)\in\Lambda^k(\Omega)$ is a solution to the Hodge--Frobenius equations in the form
\begin{equation}\label{linHF1-2}
\left\{\begin{array}{ll}
\delta\tilde\omega=0\\
d\tilde\omega = \Gamma \wedge\tilde\omega\\
\end{array}\right.
\end{equation}
with $\Gamma  \equiv   d \ln\rho(\vert B\left(\tilde{\omega}\right)\vert^2),$ where $B\equiv A^{-1}.$
Conversely, for any given $\tilde\omega\in\Lambda^k(\Omega)$ satisfying (\ref{linHF1-2}), with
$\Gamma  \equiv   d \ln\rho(\vert B\left(\tilde{\omega}\right)\vert^2)$,
the $k$-form $\omega  \equiv  B(\tilde\omega)$ satisfies eqs. \eqref{NLHe}.
 If in addition $\Omega$ is contractible, then $\tilde\omega =\delta v$,  $\omega=du$ for some conjugate A-harmonic forms  $u\in \Lambda^{k-1}(\Omega)$, $v\in\Lambda^{k+1}(\Omega).$ \end{proposition}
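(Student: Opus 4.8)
The plan is to verify all three assertions by direct computation, using repeatedly the algebraic identity $A(\eta)=\rho(|\eta|^2)\eta$, the definition of conjugate $A$-harmonic forms (\ref{CRgen}), the nilpotency of $d$ and $\delta$, and — for the final clause — the Poincar\'e lemma. For the forward direction I would set $\omega\equiv du$, so that by (\ref{CRgen}) and (\ref{Adef}) one has $\tilde\omega=\delta v=\rho(|\omega|^2)\omega$, whence also $\omega=B(\tilde\omega)$. Co-closedness $\delta\tilde\omega=\delta\delta v=0$ is immediate. For the Frobenius condition, I would expand by the Leibniz rule, $d\tilde\omega=d\rho(|\omega|^2)\wedge\omega+\rho(|\omega|^2)\,d\omega$, and observe that $d\omega=d(du)=0$, leaving $d\tilde\omega=d\rho(|\omega|^2)\wedge\omega$. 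Since $\rho>0$ one may write $\Gamma=d\ln\rho(|B(\tilde\omega)|^2)=d\ln\rho(|\omega|^2)=\rho(|\omega|^2)^{-1}d\rho(|\omega|^2)$, and then $\Gamma\wedge\tilde\omega=\rho(|\omega|^2)^{-1}d\rho(|\omega|^2)\wedge\rho(|\omega|^2)\omega=d\rho(|\omega|^2)\wedge\omega=d\tilde\omega$, which is exactly (\ref{linHF1-2}).

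For the converse I would run the same computation in reverse. Given $\tilde\omega$ satisfying (\ref{linHF1-2}) with $\Gamma=d\ln\rho(|B(\tilde\omega)|^2)$, set $\omega\equiv B(\tilde\omega)$; invertibility of $A$ gives $\rho(|\omega|^2)\omega=A(\omega)=\tilde\omega$, so that the first equation of (\ref{NLHe}) reads $\delta(\rho(|\omega|^2)\omega)=\delta\tilde\omega=0$ and holds automatically. Substituting $\tilde\omega=\rho(|\omega|^2)\omega$ into $d\tilde\omega=\Gamma\wedge\tilde\omega$ and expanding the left-hand side by Leibniz, the term $d\rho(|\omega|^2)\wedge\omega$ cancels $\Gamma\wedge\tilde\omega$ (again using $\Gamma=\rho(|\omega|^2)^{-1}d\rho(|\omega|^2)$), leaving $\rho(|\omega|^2)\,d\omega=0$; since $\rho>0$ this yields $d\omega=0$, the second equation of (\ref{NLHe}).

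For the contractible case I would apply the Poincar\'e lemma twice. By the converse just proved, $\omega\equiv B(\tilde\omega)$ satisfies $d\omega=0$, so on a contractible $\Omega$ there is $u\in\Lambda^{k-1}(\Omega)$ with $\omega=du$. For $v$, I would pass to Hodge duals: $\delta\tilde\omega=0$ is equivalent to $d(\ast\tilde\omega)=0$, so $\ast\tilde\omega=d\phi$ for some $\phi\in\Lambda^{n-k-1}(\Omega)$, and an appropriately signed choice of $v=\pm\ast\phi\in\Lambda^{k+1}(\Omega)$ gives $\delta v=\pm\ast d\ast v=\pm\ast d\phi=\pm\ast\ast\tilde\omega=\tilde\omega$. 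That $u$ and $v$ are then conjugate $A$-harmonic is immediate: $A(du)=A(\omega)=\rho(|\omega|^2)\omega=\tilde\omega=\delta v$. The only delicate point — and the nearest thing to an obstacle — is regularity bookkeeping: one must know that $B$ carries the given smooth $\tilde\omega$ to a smooth $\omega$ (which is why $\rho$, hence $B$, is required sufficiently smooth) and that the primitives furnished by the Poincar\'e lemma inherit enough smoothness; both are guaranteed by the standing hypotheses. Careful tracking of the sign in $\delta=\pm\ast d\ast$ is routine but should be displayed once to avoid ambiguity.
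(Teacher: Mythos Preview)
Your argument is correct and follows essentially the same route as the paper: both proofs obtain $\delta\tilde\omega=0$ from $\delta^2=0$, derive the Frobenius condition from $d^2u=0$ together with the factorization $\tilde\omega=\rho(|\omega|^2)\omega$, reverse the computation for the converse, and invoke the Poincar\'e lemma (and its adjoint version) for the contractible case. The only cosmetic difference is that the paper packages the Frobenius step via the integrating factor $\eta\equiv 1/\rho$ and writes $0=d^2u=d(\eta\tilde\omega)$, whereas you expand $d\tilde\omega=d(\rho\omega)$ directly by Leibniz; these are the same calculation.
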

\begin{proof} To prove the first assertion we proceed as follows. The co-closedness of $\tilde\omega$ comes directly from the fact that the generalized  Cauchy--Riemann equations \eqref{CRgen} are satisfied and that $\delta^2=0$ on differential forms of class $\mathcal C^2$.
Furthermore,
\begin{equation}\label{Beq}
du = \frac{1}{\rho (\vert du\vert^2)} A(du) = \eta(\vert \delta v\vert^2) \delta v\,,
\end{equation}
with $\eta(\vert \delta v\vert^2)$  (well) defined by the formula
$\eta(\vert \delta v\vert^2)\rho(\vert B\left(\delta v\right)\vert^2) = 1.$ We conclude that $\eta (\vert\delta v\vert^2)>0$, as $\rho(\vert du\vert^2)>0$ by hypothesis. Having set $\tilde{\omega} \equiv\delta v,$
(\ref{Beq}) implies  $0=d^2 u = d \,\left(\eta(\vert\tilde\omega\vert^2)\tilde\omega\right).$ This
yields the  nonlinear Frobenius condition in \eqref{linHF1-2}
with
$\tilde{\eta} (\vert\tilde\omega\vert^2)\equiv -  \ln\eta(\vert\tilde\omega\vert^2) =  \ln\rho(\vert B\left(\tilde\omega\right)\vert^2)\,.$

 Conversely, substituting $A(\omega)= \rho(\vert\omega\vert^2)\omega$ for $\tilde \omega$ in the first equation  in
  \eqref{linHF1-2}, one obtains the first equation in \eqref{NLHe}.
 Likewise,  the second equation in  \eqref{linHF1-2} can be multiplied by $e^{-\eta}$ and rewritten as
  $$0= d\left(\tilde\omega e^{-\tilde\eta(\vert\tilde\omega\vert^2)}\right)=d \left(A(\omega)\,
  \frac{1}{\rho(\vert \omega\vert^2)}\right) = d\omega\,.$$
 If $\Omega$ is a contractible  domain,  the application of the Poincar\`e Lemma and its adjoint version to $\omega$ and $\tilde\omega$, respectively,  completes the proof.\end{proof}

The following proposition
gives a partial converse to Prop. \ref{Aharm}. A systematic  approach to  the study of the invertibility of the operator $A$, leading to a method to construct explicit solutions to eqs. \eqref{HF1-2}, is postponed until Sec. \ref{S-construct}.

\begin{proposition}\label{P-converse}
Let $\tilde\eta\,:\, \R^+\cup \{0\}\to \R^+$ be a prescribed smooth function and $I$ be an interval such that the function  $f\,:\, \tilde t\to t\equiv \tilde t \exp \left[-2\tilde\eta(\tilde t)\right]$ restricted to $I$ is  1:1. Let $\rho\,:\, f(I)\to \R^+$ be defined by $\rho(t)=\exp\left[\tilde\eta\left(f^{-1}(t)\right)\right]$. Then for each  $\tilde\omega\in\Lambda^k(\tilde \Omega)$ satisfying (\ref{linHF1-2}) with $\Gamma =d\left[\tilde\eta (\vert\tilde\omega\vert^2)\right]$ there exists a unique $\omega\in \Lambda^k(\Omega)$ satisfying $A(\omega) = \tilde\omega$, with $A$ defined as in \eqref{Adef}.
Such $\omega$ also satisfies \eqref{NLHe} with $\rho$ as prescribed.
Conversely, if $\omega\in\Lambda^k(\Omega)$ satisfies system (\ref{NLHe}) with $\rho$ as prescribed, then the differential form $\tilde\omega\equiv A(\omega)$ satisfies \eqref{linHF1-2} with $\Gamma=  d \tilde\eta$,   $\tilde \eta$ as prescribed.   If the domains $\Omega$ and $\tilde\Omega$ are contractible, our assertions are true with $\omega$ replaced by  an exact form $du$  and $\tilde\omega$ replaced by a co-exact form $\delta v$, yielding  conjugate $A$-harmonic forms $u, v$.
Moreover,  $\omega$ satisfies homogeneous Dirichlet or Neumann conditions on $\Omega$ if and only if $\tilde{\omega}$ does as well.
\end{proposition}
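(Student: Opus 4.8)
The plan is to reduce the entire statement to a single fibrewise inversion of the algebraic operator $A$, after which both implications follow from short exterior-calculus computations of exactly the kind used in the proof of Proposition~\ref{Aharm}.

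First I would unwind the definitions of $f$ and $\rho$ into one algebraic identity. For $t\in f(I)$ write $\tau=f^{-1}(t)\in I$, so that $t=f(\tau)=\tau\,e^{-2\tilde\eta(\tau)}$; then $\rho(t)^2\,t=e^{2\tilde\eta(\tau)}\,\tau\,e^{-2\tilde\eta(\tau)}=\tau=f^{-1}(t)$. Thus $t\mapsto\rho(t)^2 t$ coincides with $f^{-1}$ on $f(I)$, and in particular is a bijection of $f(I)$ onto $I$ with inverse $f$. Transferring this to forms: if $\vert\omega\vert^2\in f(I)$ then $A(\omega)=\rho(\vert\omega\vert^2)\omega$ has $\vert A(\omega)\vert^2=\rho(\vert\omega\vert^2)^2\vert\omega\vert^2=f^{-1}(\vert\omega\vert^2)\in I$, while if $\vert\tilde\omega\vert^2\in I$ then $B(\tilde\omega)\equiv e^{-\tilde\eta(\vert\tilde\omega\vert^2)}\tilde\omega$ has $\vert B(\tilde\omega)\vert^2=f(\vert\tilde\omega\vert^2)\in f(I)$; a direct substitution using $\rho(f(\tilde t))=e^{\tilde\eta(\tilde t)}$ then gives $A\circ B=\mathrm{id}$ and $B\circ A=\mathrm{id}$ on these respective ranges. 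Since $\tilde\eta$ is smooth and $\rho$ positive, $B(\tilde\omega)$ is again a smooth $k$-form, and the identity $B\circ A=\mathrm{id}$ supplies the asserted existence and uniqueness of the $\omega$ with $A(\omega)=\tilde\omega$; everything that follows is read on the amplitude ranges where $\rho$, and hence \eqref{NLHe}, is defined.

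Next I would settle the two implications. Given $\tilde\omega$ solving \eqref{linHF1-2} with $\Gamma=d[\tilde\eta(\vert\tilde\omega\vert^2)]$, set $\omega\equiv B(\tilde\omega)$, so that $A(\omega)=\tilde\omega$; then $\delta(\rho(\vert\omega\vert^2)\omega)=\delta A(\omega)=\delta\tilde\omega=0$ and $d\omega=d\big(e^{-\tilde\eta(\vert\tilde\omega\vert^2)}\tilde\omega\big)=e^{-\tilde\eta(\vert\tilde\omega\vert^2)}\big(d\tilde\omega-\Gamma\wedge\tilde\omega\big)=0$, which are the two equations of \eqref{NLHe}. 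Conversely, if $\omega$ solves \eqref{NLHe} with the prescribed $\rho$, put $\tilde\omega\equiv A(\omega)$; then $\delta\tilde\omega=0$, and since $\omega=B(\tilde\omega)$ by the previous step, the identity $0=d\omega=e^{-\tilde\eta(\vert\tilde\omega\vert^2)}\big(d\tilde\omega-d[\tilde\eta(\vert\tilde\omega\vert^2)]\wedge\tilde\omega\big)$ rearranges into \eqref{linHF1-2} with $\Gamma=d[\tilde\eta(\vert\tilde\omega\vert^2)]$. On a contractible domain, $d\omega=0$ gives $\omega=du$ by the Poincar\'e lemma, and $\delta\tilde\omega=0$ gives $\tilde\omega=\delta v$ by its Hodge-dual version (since $\delta\tilde\omega=0$ forces $d(\ast\tilde\omega)=0$, write $\ast\tilde\omega=d\psi$ and take $v=\pm\ast\psi$); then $A(du)=A(\omega)=\tilde\omega=\delta v$, so $u$ and $v$ are conjugate $A$-harmonic. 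For the boundary conditions one observes that $\omega$ and $\tilde\omega=\rho(\vert\omega\vert^2)\omega$ differ pointwise by the strictly positive function $\rho(\vert\omega\vert^2)$; since restriction to $\p\Omega$ and the Hodge star are linear over functions, the tangential part of $\omega$ on $\p\Omega$ vanishes exactly when that of $\tilde\omega$ does (homogeneous Dirichlet), and likewise the tangential parts of $\ast\omega$ and $\ast\tilde\omega$ vanish simultaneously (homogeneous Neumann).

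The only point I expect to need real care is the bookkeeping in the first step: one must track throughout the amplitude ranges $\vert\omega\vert^2\in f(I)$ and $\vert\tilde\omega\vert^2\in I$ on which $\rho$ and $f^{-1}$ live, and check that $A$ and $B$ actually carry these onto one another. The hypothesis that $f$ restricted to $I$ is injective is exactly what makes $\rho$ single-valued and what makes $t\mapsto\rho(t)^2 t$ invertible; without such an interval the correspondence would break down. Once this is in place, the remaining manipulations are routine and parallel those in the proof of Proposition~\ref{Aharm}.
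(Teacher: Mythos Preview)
Your proposal is correct and follows essentially the same route as the paper: both construct the inverse explicitly as $\omega=e^{-\tilde\eta(|\tilde\omega|^2)}\tilde\omega$, verify $A(\omega)=\tilde\omega$, derive the boundary-condition equivalence from the fact that $\omega$ and $\tilde\omega$ differ by a positive scalar function, and invoke the Poincar\'e lemma on contractible domains. Your treatment is somewhat more self-contained---you establish uniqueness by exhibiting $B$ as a two-sided inverse and verify \eqref{NLHe} directly, whereas the paper argues uniqueness via a norm comparison ($|\omega_1|=|\omega_2|$ forces $\omega_1=\omega_2$) and defers the remaining verifications to the proof of Proposition~\ref{Aharm}.
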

\begin{proof} Let $\tilde\omega$ satisfy  (\ref{linHF1-2}) with $\Gamma =d\left[\tilde\eta (\vert\tilde\omega\vert^2)\right]$. Then the differential form
$\omega\equiv \exp\left[ -\tilde\eta(\vert\tilde\omega\vert^2)\right]\tilde\omega$ satisfies
$$A(\omega) \equiv \rho(\vert\omega\vert^2)\omega\equiv e^{\tilde\eta\left(f^{-1}(\vert\omega\vert^2)\right)}\omega= \tilde\omega\,.$$
For $\rho$ as prescribed, suppose that the differential $k$-forms  $\omega_1$, $\omega_2$ satisfy
$$\rho(\vert\omega_1\vert^2) \,\omega_1=\tilde\omega =\rho(\vert\omega_2\vert^2) \,\omega_2\, ,\qquad \vert\omega_j\vert^2\in f(I)\,.$$
From this we see that $\omega_1=\omega_2$ if and only if $\vert\omega_1\vert=\vert\omega_2\vert.$  By taking absolute values and squaring that formula, we also conclude that  $\vert\omega_1\vert=\vert\omega_2\vert$ is the unique inverse image under $f$ of $\vert\tilde\omega\vert^2.$ Thus $\omega_1=\omega_2.$

As $\tilde\omega = \rho(\vert\omega\vert^2) \omega$,  homogeneous Dirichlet  or Neumann boundary conditions for $\omega$ become homogenous Dirichlet or Neumann boundary conditions for $\tilde\omega$.

The remainder of the proof is contained in the proof of Proposition \ref{Aharm}.

\end{proof}
\begin{remark}
 Proposition \ref{P-converse} gives a precise correspondence between solutions of the Hodge--Frobenius equations \eqref{linHF1-2}  with non-linear constraint and solutions of  \eqref{NLHe}. Such a correspondence provides  the basis to obtain  existence and uniqueness theorems for Dirichlet or Neumann problems
from analogous theorems for the conventional  nonlinear Hodge theory; see \cite{SS2}.
In Section  4 this  correspondence is extended to systems of the form  \eqref{HF1-2} under
conditions on $\Gamma$ and on the density function  in \eqref{HF1-2} sufficient  to  guarantee the ellipticity condition
\begin{equation} \label{elliptic}
 0<\rho^2(Q)+2Q\rho ^{\prime}(Q)\rho(Q)
\end{equation}
for the transformed system. It is necessary to assume  appropriate smoothness of the boundary of the domain, of the coefficients of the equation  and of $\omega$ in order to guarantee the well-posedness of the Dirichlet and Neumann problems; \emph{cf.} Sec. \ref{S-BVP}, Theorems \ref{DE!}, \ref{NE!}, and  Theorems 1 and 2 of \cite{SS2}.   \end{remark}

\section{The construction of solutions}\label{S-construct}

We now want to use the operator $A$ defined in \eqref{Adef}
to prove results which are independent of equation type.
In Proposition \ref{Aharm}  we assumed the existence of an inverse for the quasi-linear coefficient $A$.
In this section we define conditions under which that hypothesis is satisfied.
\begin{theorem}\label{Tinvert}
Let $A$ be defined via the formula \eqref{Adef}.
Assume that $\rho$ is such that the function
\begin{equation}
\label{phi}
\phi_\rho(t)\equiv t\rho^2(t)\,,
\end{equation}
when restricted to the connected interval $(t_1, t_2)$, satisfies
\begin{equation} \label{monoton}
\frac{d\phi_\rho}{dt}>0 \,\;\text{ or } \,\, \;\frac{d\phi_\rho}{dt}<0\,.
\end{equation}
Let $\Lambda^k(\Omega)_{t_1,t_2}$ denote the set of differential $k$-forms $\omega$ such that
$t_1\leq \vert \omega\vert^2 \leq t_2,$ and let $(r_1, r_2)$ be the image under $\phi_\rho$ of the interval $(t_1, t_2).$ Then
\begin{equation}\label{restrict}
A_{\left|\Lambda^k(\Omega)_{t_1,t_2}\right.}\,:\,  \Lambda^k(\Omega)_{t_1,t_2} \to \Lambda^k(\Omega)_{r_1,r_2}\,,
\end{equation}
and its restriction to $\Lambda^k(\Omega)_{t_1,t_2}$ is invertible with inverse
\begin{equation} \label{B1-2}
\begin{array} {lll}
B\,:\, \Lambda^k(\Omega)_{r_1,r_2}\to \Lambda^k(\Omega)_{t_1,t_2}\,,
\qquad \tilde\omega\to \tilde\omega   /  \rho(\psi(\vert \tilde \omega\vert^2))\,,\\
\hfill\\
\qquad \mbox{with }\; \psi \equiv  {{\phi_\rho}_{\left|(t_1, t_2)\right.}}^{-1}\,:\, (r_1, r_2)\to (t_1, t_2)\,. \\
\end{array}
\end{equation}
\end{theorem}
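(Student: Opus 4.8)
The plan is to reduce the invertibility of the nonlinear, zeroth-order operator $A$ to that of the scalar function $\phi_\rho$, using the fact that at each point $x\in\Omega$ the operator $A$ acts simply as multiplication by the positive scalar $\rho(|\omega(x)|^2)$. First I would record the magnitude identity
\[
|A(\omega)|^2 \;=\; \rho^2(|\omega|^2)\,|\omega|^2 \;=\; \phi_\rho(|\omega|^2),
\]
valid pointwise on $\Omega$. Hypothesis \eqref{monoton} makes $\phi_\rho$ strictly monotone, hence injective, on $(t_1,t_2)$; together with continuity on the closure it follows that $\phi_\rho$ maps $[t_1,t_2]$ homeomorphically onto $[r_1,r_2]$ (whether $\phi_\rho$ increases or decreases, the image being the interval with endpoints $\phi_\rho(t_1),\phi_\rho(t_2)$), with inverse $\psi$ as in \eqref{B1-2}. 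By the inverse function theorem $\psi$ is as regular as $\phi_\rho$ allows, since $d\phi_\rho/dt$ does not vanish. In particular $t_1\le|\omega|^2\le t_2$ everywhere forces $r_1\le|A(\omega)|^2\le r_2$ everywhere, which is the mapping statement \eqref{restrict}, and it also shows that $B(\tilde\omega)=\tilde\omega/\rho(\psi(|\tilde\omega|^2))$ is a well-defined differential form of the appropriate class whenever $\tilde\omega\in\Lambda^k(\Omega)_{r_1,r_2}$.

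Next I would check $B\circ A=\mathrm{id}$ on $\Lambda^k(\Omega)_{t_1,t_2}$: for such $\omega$ the identity above gives $\psi(|A(\omega)|^2)=|\omega|^2$ pointwise, so
\[
B(A(\omega)) \;=\; \frac{A(\omega)}{\rho\bigl(\psi(|A(\omega)|^2)\bigr)} \;=\; \frac{\rho(|\omega|^2)\,\omega}{\rho(|\omega|^2)} \;=\; \omega,
\]
using $\rho>0$. Then I would check $A\circ B=\mathrm{id}$ on $\Lambda^k(\Omega)_{r_1,r_2}$: fixing $\tilde\omega$ with $s\equiv|\tilde\omega|^2\in[r_1,r_2]$ and setting $t\equiv\psi(s)\in[t_1,t_2]$, the relation $\phi_\rho(t)=s$ reads $t\rho^2(t)=s$, whence the form $\omega\equiv B(\tilde\omega)=\tilde\omega/\rho(t)$ satisfies $|\omega|^2=s/\rho^2(t)=t=\psi(s)$ at every point, so $\omega\in\Lambda^k(\Omega)_{t_1,t_2}$ and $A(\omega)=\rho(t)\,\tilde\omega/\rho(t)=\tilde\omega$. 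This gives both the mapping property \eqref{restrict} and the bijectivity, with inverse the map $B$ of \eqref{B1-2}.

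Since everything is a pointwise computation, I do not expect a real obstacle here; the only step that genuinely needs attention is the magnitude bookkeeping in the verification of $A\circ B=\mathrm{id}$ — one must use the \emph{defining} relation $\phi_\rho(\psi(s))=s$ to conclude that $|B(\tilde\omega)|^2$ equals $\psi(s)$ \emph{exactly}, which is what guarantees that $B$ lands in the prescribed sublevel set $\Lambda^k(\Omega)_{t_1,t_2}$ and that $\rho$ is evaluated at the same argument on both sides, so that the two compositions close up. A minor accompanying point is the treatment of the endpoints $t_1,t_2$ (and the degenerate value $t=0$ if $t_1=0$, where $A$ and $B$ both fix the zero form): these are handled by continuity, extending $\psi$ to $[r_1,r_2]$, and do not affect the argument.
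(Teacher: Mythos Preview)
Your proof is correct and follows essentially the same route as the paper: both reduce the invertibility of $A$ to that of the scalar function $\phi_\rho$ via the pointwise identity $|A(\omega)|^2=\phi_\rho(|\omega|^2)$, then verify $B\circ A=\mathrm{id}$ and $A\circ B=\mathrm{id}$ by the same magnitude bookkeeping using $\phi_\rho(\psi(s))=s$. Your remarks on regularity of $\psi$ and on the endpoints are reasonable additions but not part of the paper's argument.
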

\begin{proof}
Condition \eqref{monoton} implies by monotonicity that there exists an inverse $\psi\,:\, (r_1, r_2)\to (t_1, t_2)$ of the map $\phi_\rho$ defined in \eqref{phi} on the interval $\left(t_1,t_2\right).$ Condition \eqref{restrict} is satisfied because
\[
\vert A(\omega)\vert^2 \equiv \vert\rho(\vert \omega\vert^2) \omega\vert^2 = \rho^2(\vert\omega\vert^2)\vert\omega\vert^2\equiv \phi_\rho(\vert\omega\vert^2)\,,
\]
with $t_1 \leq \vert\omega\vert^2\leq t_2.$ Similarly, for $k$-forms  $\tilde{\omega}\in \Lambda^k(\Omega)_{\tau_1,\tau_2}$ and $B$ defined by the formula in \eqref{B1-2}  one has
\begin{equation} \label{star}
\vert B\left(\tilde{\omega}\right)\vert^2=\frac{\vert\tilde{\omega}\vert^2}{\rho^2\left(\psi\left(\vert\tilde{\omega}\vert^2\right)
\right)}=\frac{\vert\tilde{\omega}\vert^2\psi\left(\vert\tilde{\omega}\vert^2\right)}{\rho^2\left(\psi\left(\vert\tilde{\omega}\vert^2\right)
\right)\psi\left(\vert\tilde{\omega}\vert^2\right)}\,.
\end{equation}
The denominator in \eqref{star} can be rewritten as
\begin{equation}\label{invpsi}
\psi\left(\vert\tilde{\omega}\vert^2\right)\rho^2\left(\psi\left(\vert\tilde{\omega}\vert^2\right)\right) = \phi_\rho (\psi\left(\vert\tilde{\omega}\vert^2\right)  =\vert\tilde{\omega}\vert^2\,,
\end{equation}
yielding  $\vert B\left(\tilde{\omega}\right)\vert^2=\psi\left(\vert\tilde{\omega}\vert^2\right)\in\left(t_1,t_2\right).$ That is, $B\,:\,\Lambda^k(\Omega)_{r_1,r_2}\to \Lambda^k(\Omega)_{t_1,t_2}$.
For $k$-forms $\omega\in\Lambda^k(\Omega)_{t_1,t_2}$ we have
\[
B\left( A(\omega)\right) = \frac{A(\omega)}{ \rho(\psi(\vert A(\omega)\vert^2))}=
\frac{\rho(\vert \omega\vert^2) \,\omega}{\rho(\psi(\rho^2(\vert\omega\vert^2)\vert \, \omega\vert^2))}=  \frac{\rho(\vert \omega\vert^2)\, \omega}{\rho(\psi(\phi_\rho(\vert\omega\vert^2)))} =\omega\,.
\]
Likewise, for $k$-forms  $\tilde \omega\in \Lambda^k(\Omega)_{r_1,r_2}$  we have
\[
A\left(B(\tilde\omega)\right)= \rho(\vert B(\tilde\omega)\vert^2) B(\tilde\omega)= \rho\left(\frac{\vert\tilde\omega\vert^2}{\rho^2(\psi(\vert\tilde\omega\vert^2))}\right)\frac{\tilde\omega}{\rho(\psi(\vert\tilde\omega\vert^2))}=\tilde\omega\,,
\]
in which, for the last equality, we have divided \eqref{invpsi} by $\rho^2(\psi(\vert\tilde\omega\vert^2))$ and substituted the result into this equation.
This concludes the proof.\end{proof}
 Note that  the conditions in \eqref{monoton} are precisely the conditions that make the system (\ref{NLHe}),  and also (\ref{HF1-2}) with linear Frobenius condition,  either elliptic   (if $d\phi_\rho/dt>0$) or hyperbolic  (if $d\phi_\rho/dt<0$); \emph{cf.}
 (\ref{elliptic}).
\begin{remark}
 We have divided by $\rho$ at various steps of the proof of Theorem \ref{Tinvert}. Clearly this can be done if  $\rho = \rho(t)>0$ $\forall t\in\R^+\cup \{0\}.$ Nonetheless,
the milder assumption $\rho\left(\psi (\vert\tilde\omega\vert^2)\right)\neq 0$ is sufficient for the purpose of finding  smooth solutions to the equation $A(\omega)= \tilde \omega$ with  prescribed $\tilde\omega.$  In some applications, this assumption can be weakened furthermore; cf. \cite{MO2}. \end{remark}

Theorem \ref{Tinvert}  can be used to construct explicit $k$-form-valued solutions to the nonlinear co-differential equation $\delta\left(\rho(\vert\omega\vert^2)\omega\right)=0$ in \eqref{HF1-2}.
For a detailed exposition  of the method and the construction of various examples, see  \cite{MO2}.
Briefly, one argues by the Poincar\'e Lemma that a solution $\omega$ on a contractible domain of $\mathbb R^n$ always admits a ``stream $\left(n-k-1\right)$-form" $f$, that is, a form $f$ satisfying
$\rho(Q)\,\omega=\ast df\,.
$
Theorem \ref{Tinvert} can then be applied directly to obtain the solution formula
\begin{equation} \label{vareqsol}
\omega = \frac{\ast d f}{\rho\left(\psi\left(\vert d  f\vert^2\right)\right)}\,,
\end{equation}
where $\psi$ denotes the inverse(s) of the function $\phi_\rho$ given  by  \eqref{phi}.
The classical solutions $\omega$ are well defined
except possibly at the \emph{sonic hypersurface} dividing the elliptic from the hyperbolic regime. The singular set  will depend on $f$,  $\rho$ and $\psi.$
Sometimes  it is possible to define $\omega$ with continuity, or even higher regularity,  across the sonic hypersurface; \emph{cf.} \cite{MO2}, Sec. 5.1.1. In general such a property is not achieved  (see Ch.\ 6 of \cite{O4} and references cited therein).
On non-contractible domains, one can still write \eqref{vareqsol}  and produce examples of solutions to the co-differential  equation in \eqref{HF1-2}.  More  generally, one can replace the exact forms $df$ in \eqref{vareqsol} by  prescribed closed $(n-k)$-forms.
Satisfaction of the Frobenius condition
for some  $\Gamma$ can be shown and is equivalent to the existence of an integrating factor in the cases $k =1,  n-1$; \emph {cf.}
\cite{MO2}.

As an example, let us consider system \eqref{HF1-2} with prescribed density
$\rho(Q) = \vert Q-1\vert^{-1/2},\; Q\neq 1\,,$
for a differential form of degree $2$ in $4$ dimensions.
This choice of $\rho$ corresponds to the Euclidean Born--Infeld model if $Q<1$ and to the Lorentzian Born--Infeld model if $Q>1$.
All non-cavitating classical solutions $\omega$ can be expressed  by \eqref{vareqsol} on contractible domains $\Omega.$ Cavitating solutions may be expressed by \eqref{vareqsol} as a limit.
In this example, the function $\phi_\rho$ appearing in \eqref{phi} is
$$\phi_\rho(Q) = \frac{Q}{|Q-1|}\,,\; Q\neq 1\,,$$
with inverses $\psi_+\equiv{[{\phi_\rho}_{|_{[0,1)}}]}^{-1}\,:\, [0,\infty)\to[0,1),$ $\psi_-\equiv{\phi_\rho}_{|_{(1,\infty)}}^{-1}\,:\, (1,\infty)\to (1,\infty),$ given by
$$
\psi_ {\pm}\,:\,\xi\rightarrow\frac{\xi}{\xi \pm 1}\,.$$
Corresponding to these inverses of $\phi_\rho$, one obtains the families of solutions
\begin{equation}
\label{sol-pm}
\mathcal W_{\pm} = \left\{ \mathbf{\omega_ {\pm}} = \frac {* d f }{\sqrt{\vert d f\vert^2 \pm1}}\,\; \mbox { with } f\in \Lambda^1(\Omega)\,\right\}\,,
\end{equation}
with the solutions in $\mathcal W_+$ being defined (and uniformly bounded) for smoothly prescribed generalized stream 1-forms $f$, and the solutions in $\mathcal W_-$ requiring the additional condition  $\vert d f\vert>1.$ The family $\mathcal W_-$ contains unbounded solutions corresponding to choices of generalized stream forms  which satisfy $\vert d f\vert=1$ at points of the domain $\Omega.$
One may also prescribe generalized stream forms $f$ such that $\vert d f\vert\to \infty$ when approaching a smooth manifold, say  $\gamma_\infty,$ contained in $\Omega.$  As $\vert\omega_\pm\vert\to 1$ when approaching $\gamma_\infty,$ one may in some cases patch together  the two types of solutions with some regularity.  But the co-differential equation in   \eqref{HF1-2}  would not be satisfied on $\gamma_\infty$ (as $\rho$ would blow up).
Differential forms  $\omega_+\in\mathcal W^+$  and $\omega_-\in \mathcal W^-$ that satisfy a  linear Frobenius condition would then solve  (\ref{HF1-2}) in the elliptic  and hyperbolic regime, respectively.

\section{Hodge-B\"acklund transformations of solutions} \label{S-HB}

One finds in the literature a bewildering redundancy of choices for the mass density $\rho;$ see Sec.\ 1 of \cite{MT}, Sec.\ 2 of \cite {MT2}, and the pairs of densities discussed in \cite{O4}, Sec.\ 2.7 and Ch.\ 6,   in connection with the Born--Infeld and extremal surface equations. It is natural to wonder whether there is a mathematical operation underlying the varieties of density. In this section we extend Theorem 6.1 of \cite{MO1}, which related two particular densities by an application of the Hodge--B\"acklund transformation; see also the special cases studied in \cite{AA}, \cite{AP}, \cite{Le}, \cite{SSY},    \cite{Y}.
A different motivation for seeking a relation between pairs of densities comes from the fact that when we introduce Hodge--B\"acklund transformations we acquire a inhomogeneous right-hand side which has a natural variational  interpretation.  In fact,
the Euler--Lagrange equation for the nonlinear Hodge energy
\begin{equation} \label{NHEn}
    E_{NH} = \frac{1}{2}\int_M\int_0^Q\rho(s)ds\,dM\,,
\end{equation}
where $M$ is an $n$-dimensional Riemannian manifold
and $\Gamma$ is prescribed,
is the inhomogeneous equation (\emph{cf.} \cite{MO1},  Sec.\ 5.1)
    $$\delta\left[\rho(Q)\omega\right]=(-1)^{n(k+1)}\ast \left( \Gamma \wedge \ast \rho(Q)\omega\right)\,.$$
    \begin{definition} \label{dualmassg1} We define the  pair of continuously differentiable densities $\left(\rho, \hat\rho\right)$ to be a \emph{dual pair} if $\rho\,:\,I\subset\R^+\cup \{0\}\to \R^+$,  $\hat\rho\,:\, \hat I  \equiv  \phi_\rho(I)\to \R^+$, with $\phi_\rho \,:\,t\in I\,\to\,\hat t \equiv  t\rho^2(t)$, and the pair $\left(\rho, \hat\rho\right)$ satisfies the identity
\begin{equation}\label{dualmassg}
    \rho(t)\hat\rho(\hat t\,)\equiv 1 \,.
\end{equation}\end{definition}
 Definition \ref{dualmassg1} implies that  the functions $\phi_\rho$ and $\hat\phi_{\hat\rho}$, defined analogously, are inverses of one another; thus both are 1:1.
 In fact, by squaring and multiplying by $t$ throughout, one obtains $t= t\rho^2(t)\hat\rho^2(\hat t\,) = \hat t \hat\rho^2(\hat t\,)= \hat\phi_{\hat\rho}(\hat t\,)$.
 Therefore, the relation of duality defined above is symmetric. For the same reason,  ellipticity of the system \eqref{HF1-2} or \eqref{NLHe} is preserved under the transformation $\rho \to\hat\rho$.   Moreover, the relation \eqref{dualmassg} defines $\hat \rho$ in terms of $\rho$ and vice versa.

\smallskip
An example of a dual pair of densities is the pair $(\rho, \hat\rho)$, with $\rho(t)= 1/ \sqrt{1+t}$ -- associated in the applications with the Born--Infeld model and with the  minimal surface equation -- and   $\hat\rho (t) = 1/ \sqrt{1-t}$ with $ t <1$, associated with the maximal surface equation. The density $\rho(t) = 1/ \sqrt{t-1}$ with $t>1$ is self-dual and is associated with extremal surfaces in Minkowski space.

We find  in the following proposition that systems having  the form \eqref{HFg} can be related to each other by Hodge--B\"acklund transformations.
\begin{proposition}
\label{HBg}
Let $\Sigma,$  $\Gamma$ be given, continuous differential 1-forms, $\left(\rho, \hat\rho\right)$ be a prescribed  dual pair  of  densities.
Then the  k-form  $\omega$ satisfies the nonlinear Hodge--Frobenius system
\begin{equation}\label{HFg}
\left\{\begin{array}{ll}
d* \left(\rho(\vert\omega\vert^2)\omega\right) =\Sigma \wedge * \left(\rho(Q)\omega\right)\\
d\omega = \Gamma\wedge \omega
\end{array}\right.
\end{equation}
if and only if the ($n-k$)-form $\xi\equiv *\left(\rho(\vert\omega\vert^2)\omega\right)$  satisfies the dual  system
\begin{equation}\label{HF*g}
\left\{\begin{array}{ll}
d* \left(\hat\rho(\vert\xi\vert^2)\xi\right) =\Gamma \wedge * \left(\hat\rho(\vert\xi\vert^2)\xi\right)\\
d\xi = \Sigma\wedge \xi
\end{array}\right.
\end{equation}
\end{proposition}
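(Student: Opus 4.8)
The plan is to obtain the equivalence by a direct computation that reduces the whole statement to three elementary facts about the Hodge star on a Riemannian $n$-manifold — that $*(f\alpha)=f*\alpha$ for a $0$-form $f$, that $|*\alpha|=|\alpha|$, and that $**\alpha=(-1)^{p(n-p)}\alpha$ on $p$-forms — together with the duality identity \eqref{dualmassg}. The one computation that carries content is to express $*\!\left(\hat\rho(|\xi|^2)\xi\right)$ explicitly in terms of $\omega$.

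First I would compute $|\xi|^2$. Since $\rho(|\omega|^2)$ is a scalar ($0$-form), $\xi=*(\rho(Q)\omega)=\rho(Q)\,{*}\omega$, so $|\xi|^2=\rho^2(Q)|{*}\omega|^2=\rho^2(Q)Q=\phi_\rho(Q)$; in particular $|\xi|^2$ lies in $\hat I=\phi_\rho(I)$, which is exactly where $\hat\rho$ is defined, so $\hat\rho(|\xi|^2)$ is legitimate. By Definition \ref{dualmassg1}, $\rho(Q)\,\hat\rho(\phi_\rho(Q))\equiv 1$, hence $\hat\rho(|\xi|^2)=1/\rho(Q)$, and therefore
\[ \hat\rho(|\xi|^2)\,\xi=\tfrac{1}{\rho(Q)}\,\rho(Q)\,{*}\omega={*}\omega, \qquad *\!\left(\hat\rho(|\xi|^2)\,\xi\right)={**}\omega=(-1)^{k(n-k)}\omega. \]

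With this identity in hand, the proposition follows by matching the two systems equation by equation. The first equation of \eqref{HFg}, namely $d*(\rho(Q)\omega)=\Sigma\wedge*(\rho(Q)\omega)$, is literally $d\xi=\Sigma\wedge\xi$, the second equation of \eqref{HF*g}, just by the definition of $\xi$. The second equation of \eqref{HFg}, $d\omega=\Gamma\wedge\omega$, becomes, after multiplying through by the nonzero constant $(-1)^{k(n-k)}$ and inserting the displayed identity, $d*\!\left(\hat\rho(|\xi|^2)\xi\right)=\Gamma\wedge*\!\left(\hat\rho(|\xi|^2)\xi\right)$, the first equation of \eqref{HF*g}. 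Because each of these two links is a genuine biconditional — each side is obtained from the other by a definitional or algebraic identity, with no equation used one-directionally — the ``if and only if'' is immediate. Structurally, this also reflects the fact that $\omega\mapsto\xi$ is involutive up to the constant $(-1)^{k(n-k)}$ and that the duality relation \eqref{dualmassg} is symmetric, as recorded after Definition \ref{dualmassg1}; so the reverse implication could equally be read off as the forward implication applied to the dual data $(\hat\rho,\rho)$ with $\Sigma$ and $\Gamma$ interchanged.

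I do not expect a serious obstacle; the computation is essentially bookkeeping. The two points that need care are: (i) confirming that $|\xi|^2=\phi_\rho(Q)$ really lands in the domain $\hat I$ of $\hat\rho$ (automatic once $Q=|\omega|^2\in I$, since $\hat I=\phi_\rho(I)$ by definition), so that \eqref{dualmassg} is invoked at the correct argument; and (ii) tracking the Hodge-star sign $(-1)^{k(n-k)}$ and checking that it appears as a common nonzero scalar multiple on both sides of the Frobenius-type equation and hence cancels. (In a pseudo-Riemannian ambient space this sign would merely pick up the sign of the metric determinant, and the argument is otherwise unchanged.)
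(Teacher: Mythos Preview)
Your proof is correct and follows essentially the same route as the paper's: both reduce the claim to the identity $*\!\left(\hat\rho(|\xi|^2)\xi\right)=\sigma_k\,\omega$ (with $\sigma_k=(-1)^{k(n-k)}$) obtained from the duality relation \eqref{dualmassg}, and then match the two systems equation by equation. You are slightly more explicit than the paper in verifying $|\xi|^2=\phi_\rho(Q)\in\hat I$ before invoking \eqref{dualmassg}, and in articulating why the sign $\sigma_k$ cancels on both sides, but the argument is the same in substance.
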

\begin{proof} Multiplying the definition $\xi\equiv *\left(\rho(\vert\omega\vert^2)\omega\right)$ by $\hat\rho(\vert\xi\vert^2)$ and using \eqref{dualmassg}, we obtain
 $\ast \hat\rho(\vert\xi\vert^2)\xi  =  \ast_{n-k}\ast_k \omega  \equiv   \sigma_k\,\omega,$  where the value of $\sigma_k=\pm1$  depends on the order $k$ of the differential form $\omega$ and on the dimension $n$ of the domain $\Omega$.
By the second equation in \eqref{HFg} this yields
\[
d(*\hat\rho(\vert\xi\vert^2)\xi)= d \left(\sigma_k\, \omega\right) = \sigma_k \,d\omega =
(\sigma_k)^2 \,\Gamma\wedge *(\hat \rho (\vert\xi\vert^2)\xi) = \Gamma \wedge * (\hat \rho (\vert\xi\vert^2)\xi)\,,
\]
which is the first equation in the system \eqref{HF*g}. The second equation in \eqref{HF*g} is the first equation in the system \eqref{HFg} with a change in notation.
\end{proof}

If $\Gamma=\Sigma\equiv 0,$ Proposition \ref{HBg}  yields the standard Hodge duality result
for the conventional nonlinear Hodge equations (\ref{NLHe}); see \cite{ISS}, \cite{SS1}.

\begin{theorem} \label{use1} Let $\eta,\zeta\,:\, I\subset \R^+\cup\{0\}\to \R^+\cup\{0\}$ be prescribed continuously differentiable functions, with the additional hypothesis on $\eta$ that the function $f_\eta\,:\,t\in I\to t \exp [- 2\eta(t)]\in \R^+\cup\{0\}$ be invertible with inverse $g_\eta$.
 Let the terms $\Sigma$, $\Gamma$ and the mass density $\rho$
be prescribed by $\Sigma = d\left[\zeta (\vert \omega\vert^2)\right]$,  $\Gamma=d\left[\eta\left(\vert \omega\vert^2\right)\right]$, $\rho=\rho_1(x, \vert\omega\vert^2),$
for $\vert \omega\vert^2\in I$  in  \eqref{HFg}.
Then for every classical solution $\omega_1$ of system \eqref{HFg}  there is a classical solution
$\omega_0$ of the conventional nonlinear Hodge equations \eqref{NLHe} with mass density $\rho_0(x, \vert\omega_0\vert^2),$ where $\rho_0$ depends on
$\rho_1$, $\eta$ and $\zeta$ and $\omega_0$ is related to $\omega_1$ by $C^1$ conformal transformations. The ellipticity condition for system \eqref{NLHe} holds if and only if $g_\eta^\prime$ and $\partial_t \phi_{\rho_1 e^{-\zeta}} $ have the same sign.
The converse also holds.
\end{theorem}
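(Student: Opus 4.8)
The plan is to produce $\omega_0$ as an explicit conformal rescaling of $\omega_1$, the conformal factor being the integrating factor for the nonlinear Frobenius condition, while the inhomogeneous right-hand side of the first equation of \eqref{HFg} is absorbed by a second integrating factor built from $\zeta$. This is the mechanism used in the integrating-factor discussion following Proposition \ref{Aharm} and in Proposition \ref{P-converse}, now carried out for the explicitly prescribed $\Sigma,\Gamma$ and a variable density.

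First I would take a classical solution $\omega_1$ of \eqref{HFg} with $|\omega_1|^2\in I$ and set $\omega_0\equiv e^{-\eta(|\omega_1|^2)}\omega_1$. Since $\Gamma=d[\eta(|\omega_1|^2)]$, the second equation of \eqref{HFg} gives $d\omega_0=e^{-\eta(|\omega_1|^2)}\big(d\omega_1-d[\eta(|\omega_1|^2)]\wedge\omega_1\big)=0$, the closedness required in \eqref{NLHe}. Since $\Sigma=d[\zeta(|\omega_1|^2)]$, the first equation of \eqref{HFg} rewrites as $d\big(e^{-\zeta(|\omega_1|^2)}\ast(\rho_1(x,|\omega_1|^2)\omega_1)\big)=0$; applying $\ast$ and using $\ast\ast=\pm\mathrm{id}$ together with $\delta=\pm\ast d\ast$ converts this to $\delta\big(e^{-\zeta(|\omega_1|^2)}\rho_1(x,|\omega_1|^2)\,\omega_1\big)=0$. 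Writing $\omega_1=e^{\eta(|\omega_1|^2)}\omega_0$, the form under $\delta$ equals $e^{[\eta-\zeta](|\omega_1|^2)}\rho_1(x,|\omega_1|^2)\,\omega_0$; so, using the hypothesised inverse $|\omega_1|^2=g_\eta(|\omega_0|^2)$ of $f_\eta$ and defining
\[
\rho_0(x,s)\equiv e^{[\eta-\zeta](g_\eta(s))}\,\rho_1(x,g_\eta(s)),\qquad s\in f_\eta(I),
\]
one obtains $\delta(\rho_0(x,|\omega_0|^2)\omega_0)=0$, so $\omega_0$ solves \eqref{NLHe} with density $\rho_0$. Here $\rho_0>0$ because $\rho_1>0$, $\rho_0$ inherits the regularity of $\rho_1,\eta,\zeta,g_\eta$, and $\omega_0$ and $\omega_1$ differ by the positive $C^1$ factor $e^{\mp\eta(\cdot)}$, i.e. by a $C^1$ conformal transformation. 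Since $|\omega_1|^2\in I$ one has $|\omega_0|^2=f_\eta(|\omega_1|^2)\in f_\eta(I)$, so $\rho_0$ is evaluated where it is defined.

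For the ellipticity claim I would compute $\phi_{\rho_0}(x,s)=s\,\rho_0^2(x,s)$ and substitute $s=f_\eta(t)=t\,e^{-2\eta(t)}$ (so $t=g_\eta(s)$): the factors $e^{2\eta(t)}$ cancel, leaving $\phi_{\rho_0}(x,s)=t\,e^{-2\zeta(t)}\rho_1^2(x,t)=\phi_{\rho_1 e^{-\zeta}}(x,t)$, i.e. $\phi_{\rho_0}=\phi_{\rho_1 e^{-\zeta}}\circ g_\eta$. The chain rule gives $\partial_s\phi_{\rho_0}(x,s)=\big(\partial_t\phi_{\rho_1 e^{-\zeta}}\big)(x,g_\eta(s))\cdot g_\eta'(s)$, and since by \eqref{elliptic} ellipticity of \eqref{NLHe} with density $\rho_0$ is exactly $\partial_s\phi_{\rho_0}>0$, this holds iff $g_\eta'$ and $\partial_t\phi_{\rho_1 e^{-\zeta}}$ carry the same sign. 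For the converse I would run the same computation backwards: from a classical solution $\omega_0$ of \eqref{NLHe} with the density $\rho_0$ above, set $\omega_1\equiv e^{\eta(g_\eta(|\omega_0|^2))}\omega_0$ (well defined in terms of $\omega_0$) and recover $\rho_1(x,t)=e^{[\zeta-\eta](t)}\rho_0(x,f_\eta(t))$; then $d\omega_0=0$ yields $d\omega_1=d[\eta(|\omega_1|^2)]\wedge\omega_1=\Gamma\wedge\omega_1$, while $\rho_1(x,|\omega_1|^2)\omega_1=e^{\zeta(|\omega_1|^2)}\rho_0(x,|\omega_0|^2)\omega_0$ together with $\delta(\rho_0(x,|\omega_0|^2)\omega_0)=0$ yields $d\ast(\rho_1(x,|\omega_1|^2)\omega_1)=\Sigma\wedge\ast(\rho_1(x,|\omega_1|^2)\omega_1)$.

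The only genuine friction I anticipate is bookkeeping: consistently distinguishing $|\omega_0|^2$ from $|\omega_1|^2=g_\eta(|\omega_0|^2)$ in the exponentials and in $\rho_1$, and keeping the $\ast\ast=\pm\mathrm{id}$, $\delta=\pm\ast d\ast$ signs straight (they do not affect the vanishing of $d\omega_0$ or $\delta(\rho_0\omega_0)$). The one computation that must come out cleanly is the ellipticity identity $\phi_{\rho_0}=\phi_{\rho_1 e^{-\zeta}}\circ g_\eta$, which hinges on the cancellation of $e^{\pm 2\eta}$ after the substitution $s=f_\eta(t)$; one should also record the regularity needed, namely $g_\eta\in C^1$ (hence $f_\eta'\neq0$ on the relevant range) for that step, and $\rho_0>0$ so that \eqref{NLHe} with density $\rho_0$ is genuinely of nonlinear Hodge type.
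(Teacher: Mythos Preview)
Your proposal is correct and follows essentially the same approach as the paper: the paper defines $\omega_0=e^{-\eta(|\omega_1|^2)}\omega_1$, sets $\rho_0(x,\hat t)=e^{\eta(g_\eta(\hat t))-\zeta(g_\eta(\hat t))}\rho_1(x,g_\eta(\hat t))$, verifies the two equations of \eqref{NLHe} by expanding $e^\eta d\omega_0$ and $e^\zeta d\!\ast(\rho_0\omega_0)$, and obtains the ellipticity statement from the identity $\phi_{\rho_0}(x,\hat t)=\phi_{\rho_1 e^{-\zeta}}(x,g_\eta(\hat t))$ and the chain rule, exactly as you do. The only cosmetic difference is that the paper keeps the first equation in the $d\!\ast$ form rather than passing through $\delta$.
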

\begin{proof} Let $\omega_1$ be a differential form satisfying $\vert\omega_1\vert^2\in I$ and $\rho =\rho_1\left(x, \vert\omega_1\vert^2\right)$ be a prescribed density function.
Define
\begin{equation}\label{w0}
\omega_0= e^{-\eta{(\vert\omega_1\vert^2)}}\omega_1\,.
\end{equation}
Then   $\vert\omega_0\vert^2\in f( I)$ and $\vert\omega_1\vert^2= g_\eta (\vert\omega_0\vert^2)$. This enables us to define a density function
\begin{equation}\label{rho0}
\rho_0 (x, \vert\omega_0\vert^2) =e^{\eta\left(g_\eta(\vert\omega_0\vert^2)\right)-\zeta\left( g_\eta(\vert\omega_0\vert^2)\right)}\rho_1\left(x, g_\eta(\vert\omega_0\vert^2)\right)\,.\end{equation}
Conversely, given a differential form $\omega_0$ satisfying $\vert\omega_0\vert^2\in f(I)$, a prescribed density function $\rho =\rho_0\left(x, \vert\omega_0\vert^2\right)$,   and functions $\eta$ and $\zeta$ as defined in the hypotheses of this theorem, one can rewrite definition  \eqref{w0} as
$$
\omega_1= e^{\eta{\left(g_\eta(\vert\omega_0\vert^2)\right)}}\omega_0\,,
$$
and   define $\omega_1$ in terms of $\omega_0.$
Likewise, formula \eqref{rho0} can be rewritten as
$$
\rho_1 (x, \vert\omega_1\vert^2) =e^{\zeta(\vert\omega_1\vert^2)-\eta (\vert\omega_1\vert^2)}\rho_0\left(x, f_\eta(\vert\omega_1\vert^2)\right)\,,$$
defining $\rho_1$ in terms of $\rho_0$.

It is easily seen that the differential form  $\omega_0$  satisfies system \eqref{NLHe} with density function $\rho_0 (x, \vert\omega_0\vert^2)$ if and only if $\omega_1$  satisfies system \eqref{HFg} with density function $\rho_1 (x, \vert\omega_1\vert^2)$ and coefficients $\eta$ and $\zeta$ as in the hypotheses of the theorem. In fact,
\[
e^\eta d\omega_0=e^\eta d\left(e^{-\eta}\omega_1\right)=- \,d\eta\wedge\omega_1+ d\omega_1\,,\; \mbox{ with }\eta = \eta(\vert\omega_1\vert^2) = \eta\left( g_\eta(\vert\omega_0\vert^2) \right)\,,
\]
and
\[
e^{\zeta} \,d\ast (\rho_0\omega_0) = e^{\zeta} \,d\ast (e^{-\zeta} \rho_1\omega_1)= -d\zeta\wedge\ast\left(\rho_1\omega_1\right)+d\ast(\rho_1\omega_1),
\mbox { with }\zeta = \zeta (\vert\omega_1\vert^2).\]
Finally, the ellipticity condition for system \eqref{NLHe} with $\rho = \rho_0$ is
$$\frac{\partial \phi_{\rho_0}(x, \hat t)}{\partial \hat t} >0\,,\qquad \mbox { with     }\; \phi_{\rho_0}(x,\hat t) =\, \hat t\,\rho_0^2(x, \hat t)\,,\quad \hat t\in f_\eta(I)\,.$$
Squaring both sides of \eqref{rho0} and multiplying by $\hat t$, one obtains
\begin{align}\notag
&\quad\phi_{\rho_0}(x,\hat t) =\,\hat t\, \rho_0^2 (x, \hat t) = \hat t \, e^{2\eta\left(g_\eta(\hat t)\right)}\left(\rho_1\left(x, g_\eta(\hat t)\right)e^{-\zeta\left(g_\eta(\hat t)\right)}\right)^2 = \notag \\
&\hat t \, e^{2\eta(t)} \left(   \rho_1\left(x, t\right)e^{-\zeta(t)}  \right)^2 = t \left(   \rho_1\left(x, t\right)e^{-\zeta(t)}  \right)^2 = \phi_{\rho_1 e^{-\zeta}} (x, t)\,;\,\mbox{ with }  t = g_\eta(\hat t\,).\notag\\
\notag\end{align}
Thus,
$$\frac{\partial \phi_{\rho_0}(x, \hat t\,)}{\partial \hat t} =
\frac{\partial \phi_{\rho_1 e^{-\zeta}}(x,  t)}{\partial t}\,\,  g_\eta^\prime(\hat t\,) ,\,\quad \mbox{ with } \, t = g_\eta(\hat t\,)\,. $$
\end{proof}
\begin{proposition} \label{use2} Let $\eta,\zeta\,:\, \Omega\to\R$ be prescribed continuously differentiable functions.  Then for every classical solution $\omega_1$ of  \eqref{HFg} with mass density $\rho_1$,  coefficients $\Sigma = d\zeta$ and  $\Gamma=d\eta$,  there is a classical solution  $\omega_0$ of the nonlinear Hodge equations \eqref{NLHe} with density $\rho_0.$ Here $\rho_0$ depends on
$\rho_1$, $\eta$ and $\zeta$; $\omega_0$  is related to $\omega_1$ by $C^1$ conformal transformations. The converse also holds. Ellipticity is preserved by this correspondence.
\end{proposition}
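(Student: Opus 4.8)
The plan is to follow the proof of Theorem~\ref{use1}, which becomes considerably simpler here because $\eta$ and $\zeta$ depend on $x$ alone and not on $\vert\omega\vert^2$; in particular no invertibility hypothesis on an auxiliary function is needed, and the change of variables between $\omega_0$ and $\omega_1$ is an unconditional, globally defined pointwise rescaling. First I would set $\omega_0 \equiv e^{-\eta(x)}\omega_1$, which is a $C^1$ conformal transformation since $\eta\in C^1(\Omega)$ and $e^{-\eta}>0$. Using $\Gamma = d\eta$ together with the second equation of \eqref{HFg}, one gets
\[
d\omega_0 = e^{-\eta}\big(-\,d\eta\wedge\omega_1 + d\omega_1\big) = e^{-\eta}\big(-\,d\eta\wedge\omega_1 + d\eta\wedge\omega_1\big) = 0,
\]
which is the closedness condition in \eqref{NLHe}.

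Next, since $\vert\omega_1\vert^2 = e^{2\eta(x)}\vert\omega_0\vert^2$, I would define the density
\[
\rho_0\big(x,\vert\omega_0\vert^2\big) \equiv e^{\eta(x)-\zeta(x)}\,\rho_1\!\big(x,\,e^{2\eta(x)}\vert\omega_0\vert^2\big),
\]
which is positive and inherits the regularity of $\rho_1,\eta,\zeta$. A one-line substitution then gives $e^{-\zeta}\rho_1(\vert\omega_1\vert^2)\,\omega_1 = \rho_0(\vert\omega_0\vert^2)\,\omega_0$, so that, using $\Sigma = d\zeta$ and the first equation of \eqref{HFg},
\[
d\ast\!\big(\rho_0(\vert\omega_0\vert^2)\omega_0\big) = d\big(e^{-\zeta}\ast(\rho_1\omega_1)\big) = -\,e^{-\zeta}\,d\zeta\wedge\ast(\rho_1\omega_1) + e^{-\zeta}\,d\ast(\rho_1\omega_1) = 0,
\]
i.e.\ $\delta\big(\rho_0(\vert\omega_0\vert^2)\omega_0\big)=0$ after applying the isomorphism $\ast$. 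Hence $\omega_0$ solves \eqref{NLHe} with density $\rho_0$. The converse runs the identical computation backwards: given a solution $\omega_0$ of \eqref{NLHe} with density $\rho_0$, set $\omega_1\equiv e^{\eta(x)}\omega_0$ and $\rho_1(x,\vert\omega_1\vert^2)\equiv e^{\zeta(x)-\eta(x)}\rho_0\big(x,e^{-2\eta(x)}\vert\omega_1\vert^2\big)$ and verify the two equations of \eqref{HFg} by the same manipulations; the two prescriptions for $\rho_0$ and $\rho_1$ are mutually inverse, so the correspondence is a bijection.

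Finally, for the preservation of ellipticity I would square the definition of $\rho_0$ and multiply by $\hat t\equiv\vert\omega_0\vert^2$ to obtain $\phi_{\rho_0}(x,\hat t) = \hat t\,\rho_0^2(x,\hat t) = e^{-2\zeta(x)}\,\phi_{\rho_1}\!\big(x,\,e^{2\eta(x)}\hat t\big)$, where $\phi_\rho(t)=t\rho^2(t)$ as in Theorem~\ref{use1}; differentiating in $\hat t$ yields
\[
\frac{\partial\phi_{\rho_0}}{\partial\hat t}(x,\hat t) = e^{2\eta(x)-2\zeta(x)}\,\frac{\partial\phi_{\rho_1}}{\partial t}(x,t),\qquad t = e^{2\eta(x)}\hat t.
\]
Since $e^{2\eta-2\zeta}>0$ pointwise, $\partial_{\hat t}\phi_{\rho_0}$ and $\partial_t\phi_{\rho_1}$ have the same sign, so the ellipticity condition \eqref{elliptic} (equivalently $\partial_t\phi_{\rho_1}>0$) for one system is equivalent to that for the other, and likewise for the hyperbolic alternative. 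I do not expect a genuine obstacle here: the only points needing care are tracking the sign convention in $\delta = \pm\,\ast d\,\ast$ and noting that, in contrast with Theorem~\ref{use1}, the $x$-dependence of $\eta$ is decoupled from $\omega$, which is precisely why the change of variables is invertible with no extra hypothesis.
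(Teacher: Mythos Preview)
Your proof is correct and follows essentially the same approach as the paper: the same conformal rescaling $\omega_0=e^{-\eta}\omega_1$, the same density $\rho_0(x,|\omega_0|^2)=e^{\eta-\zeta}\rho_1(e^{2\eta}|\omega_0|^2)$, and the same two one-line computations for $d\omega_0$ and $d\ast(\rho_0\omega_0)$. You in fact go slightly beyond the paper by explicitly verifying the ellipticity claim via $\phi_{\rho_0}(x,\hat t)=e^{-2\zeta}\phi_{\rho_1}(x,e^{2\eta}\hat t)$, which the paper asserts but does not spell out in its proof of this proposition.
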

\begin{proof} Given a $k$-form $\omega_1$ and a density function $\rho_1(\vert\omega_1\vert^2)$, define
\[
\omega_0= e^{-\eta{(x)}}\omega_1\,\,\mbox{ and }\; \rho_0 (x,  \vert\omega_0\vert^2) = e^{\eta(x)-\zeta(x)}\rho_1(e^{2\eta(x)}\vert\omega_0\vert^2).
\]
If $\omega_1$ satisfies \eqref{HFg} with mass density $\rho_1$ and coefficients $\Sigma$ and  $\Gamma$ as in the hypotheses of the proposition, then $\omega_0$ satisfies \eqref{NLHe} with density $\rho_0$. In fact,
\[d\omega_0= d\left(e^{-\eta}\omega_1\right) =e^{-\eta} \left(- d\eta\wedge\omega_1+ d\omega_1\right)=0 \,,\]
\[d\ast \left(\rho_0\omega_0\right) = d(e^{-\zeta} \ast \rho_1\omega_1) = e^{-\zeta}\left(-d\zeta\wedge\ast\left(\rho_1\omega_1\right)+d\ast\left(\rho_1\omega_1\right)\right)=0
\,.\]
The converse, for prescribed $\omega_0$ and  $\rho_0$ holds with $\omega_1$ and $\rho_1$ defined by
$$
\omega_1= e^{\eta{(x)}}\omega_0\,\,\mbox{ and }\; \rho_1 (x,  \vert\omega_1\vert^2) = e^{\zeta(x)-\eta(x)}\rho_0(e^{-2\eta(x)}\vert\omega_1\vert^2)\,.
$$\end{proof}
The prescription  $\eta(x)=\zeta(x)$ in Proposition \ref{use2}, yielding the simpler relation between densities $\rho_0= \rho_1(\exp\left[2\eta(x)\right]\vert\omega_0\vert^2)$, corresponds to the variational equations for the  nonlinear Hodge--Frobenius theory for gradient-recursive $k$-forms    (that is, with prescribed exact $\Gamma$).

\section{Boundary value problems}
\label{S-BVP}
Theorem \ref{use1} -- with nonlinear Frobenius condition --  and Proposition \ref{use2} allow us to extend the existence and uniqueness theorem for the Dirichlet and Neumann problems established in \cite{SS2} for the conventional  nonlinear Hodge theory (proven in their strongest formulation for $1$-forms) to system  \eqref{HFg} for gradient-recursive forms. For this application we use the results in \cite{SS2} in their general formulation for
density functions which may depend explicitly on $x$.
Here  $M$ denotes an oriented, finite Riemannian manifold of dimension $n$ with $\mathcal C^\infty$ boundary \cite{ISS}.
The following theorems correspond to Theorems 1, 2  in
\cite{SS2}.
We establish the following definition.
\begin{definition} The triplet of functions $(\rho, \zeta, \eta)$ is said to be an \emph {admissible system} if the following conditions hold:
a) $f_\eta\,:\, t\in \R^+\cup\{0\}\to\hat t\equiv t \exp [-2\eta(t)] \in \R^+\cup\{0\}$ is  1:1 and onto;
b)  $\rho_0 \equiv \rho(x, t) \exp[\eta(t)- \zeta(x, t)]\in [k, 1/k]$ for some constant $k>0$, $\forall (x, t)$;
c) there exists  $T >0$ s.t.  $\left(\partial_t \phi_{\rho e^{-\zeta}}\right)\,  g^\prime_\eta  >0$,  $\forall x$, $\forall t\in (0, T)$; here $g_\eta$ denotes the inverse of $f_\eta$.
The {\emph sonic speed} associated with an admissible system $(\rho, \zeta, \eta)$ is $Q_s \equiv \sup \{T\}$ such that c) is satisfied.
 A $k$-form $\omega$ is said to be \emph{subsonic} if $\max_{x\in M} \vert\omega\vert^2 < Q_s.$
\end{definition}
Following \cite{SS2}, the inhomogeneous Dirichlet  boundary data are given by an element of the space
$\mathcal D = \ker d \oplus \mathcal C^{1+\alpha}(\bar M)$, while inhomogeneous Neumann data are given by an element of the space $\mathcal N = \ker d \oplus \mathcal N_2,$ with $\mathcal N_2 = \ker \delta$ if $n\leq 3$,
$\mathcal N_2 =0$ if $n>3$. We denote by $T\omega$, $N\omega$ respectively,  the restriction to the boundary of the tangential component, normal component respectively,
 of $\omega$.
\begin{theorem}
\label{DE!}
Let  $(\rho, \zeta, \eta)$ be an admissible system of class $\mathcal {C}^{2+\alpha}$ in $x$ and $\mathcal C^{1+\alpha}$ in $t$, with sonic speed $Q_s$.
There is an open connected set $\mathcal O\in \mathcal D$ containing the origin such that for each pair of $1$-forms $(\gamma, \sigma)\in \mathcal O$, there is a unique subsonic $1$-form $\omega\in \mathcal C^{1+\alpha}(\bar M)$ having the same relative periods as $\gamma$, satisfying  and
\begin{equation}\label{DHF}
\left\{\begin{array}{lll}
d* \left(\rho(\vert\omega\vert^2)\,\omega\right) =d\zeta \wedge * \left(\rho(Q)\,\omega\right) + d\ast \sigma\\
d\omega = d\eta\wedge \omega\\
T \left(e^{-\eta(\vert\omega\vert^2)} \omega\right) = T\gamma  \mbox { on } \partial M\,.
\end{array}\right.
\end{equation}
Moreover, for any given continuous path $(\gamma(\tau), \sigma(\tau))$ on $\mathcal D$, the solution $\omega(\tau)$ will also depend continuously on $\tau$ in the uniform norm and, either is subsonic $\forall \tau$ or there exists a number $\tau_s$ such that $\sup _{x\in M} \vert\omega\vert^2(\tau)\to Q_s$ as $\tau\to\tau_s$.
\end{theorem}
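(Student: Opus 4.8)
The plan is to reduce the boundary value problem \eqref{DHF} to the Dirichlet problem for the conventional nonlinear Hodge equations \eqref{NLHe} solved in \cite{SS2}, Theorem 1, by applying the Hodge--B\"acklund transformation of Theorem \ref{use1} (in its nonlinear--Frobenius form) together with the $x$-dependent substitution of Proposition \ref{use2}. Given a prospective solution $\omega$ of \eqref{DHF} with $\vert\omega\vert^2\in I$, set $\omega_0=e^{-\eta(\vert\omega\vert^2)}\omega$ and let $\rho_0$ be the density \eqref{rho0} built from $\rho$, $\eta$, $\zeta$, so that $\rho_0\omega_0=e^{-\zeta}\rho(\vert\omega\vert^2)\omega$. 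By Theorem \ref{use1} the nonlinear Frobenius condition $d\omega=d\eta\wedge\omega$ is equivalent to $d\omega_0=0$; absorbing the term $d\zeta\wedge *(\rho\omega)$ into the factor $e^{-\zeta}$ turns the first line of \eqref{DHF} into an inhomogeneous conventional nonlinear Hodge equation $d*(\rho_0\omega_0)=e^{-\zeta}\,d*\sigma$ with density $\rho_0$; and the boundary condition in \eqref{DHF} reads literally $T\omega_0=T\gamma$. The phrase ``$\omega$ has the same relative periods as $\gamma$'' is to be read as the assertion that the closed form $\omega_0=e^{-\eta(\vert\omega\vert^2)}\omega$ has them.

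Next I would verify that the transformed data fit the hypotheses of \cite{SS2}, Theorem 1. Admissibility condition (b) is exactly the uniform bound $\rho_0\in[k,1/k]$; condition (a) makes $f_\eta$ a strictly increasing continuous bijection of $\R^+\cup\{0\}$, so its inverse $g_\eta$ is globally defined, and the regularity hypotheses on $(\rho,\zeta,\eta)$ ($\mathcal C^{2+\alpha}$ in $x$, $\mathcal C^{1+\alpha}$ in $t$) pass to $\rho_0$ and $g_\eta$; condition (c), combined with the identities $\phi_{\rho_0}(x,\hat t)=\phi_{\rho e^{-\zeta}}(x,g_\eta(\hat t))$ and $\partial_{\hat t}\phi_{\rho_0}=(\partial_t\phi_{\rho e^{-\zeta}})\,g_\eta'$ established in the proof of Theorem \ref{use1}, shows that the transformed system is elliptic exactly for $\vert\omega_0\vert^2<f_\eta(Q_s)$. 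Since $\vert\omega_0\vert^2=f_\eta(\vert\omega\vert^2)$ and $f_\eta$ is increasing, $\omega$ is subsonic in the sense of our definition if and only if $\omega_0$ is subsonic in the sense of \cite{SS2}, the two sonic speeds being related by $\hat Q_s=f_\eta(Q_s)$.

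Then I would invoke \cite{SS2}, Theorem 1, for the transformed problem: it produces an open connected neighborhood of the origin in $\mathcal D$ on which there is a unique subsonic solution $\omega_0\in\mathcal C^{1+\alpha}(\bar M)$ with the prescribed relative periods, depending continuously on the data along paths, with the subsonic-or-degeneration dichotomy. Pulling this neighborhood back through the continuous, origin-fixing map that sends $(\gamma,\sigma)$ to the transformed data yields the set $\mathcal O\subset\mathcal D$ of the statement. Transforming back by $\omega=e^{\eta(g_\eta(\vert\omega_0\vert^2))}\omega_0$, Theorem \ref{use1} gives a solution of \eqref{DHF}; it lies in $\mathcal C^{1+\alpha}(\bar M)$ because $g_\eta$ is $\mathcal C^{1+\alpha}$, it is subsonic because $\max_M\vert\omega\vert^2=\max_M g_\eta(\vert\omega_0\vert^2)<Q_s$, and it has the same relative periods as $\gamma$ since $e^{-\eta(\vert\omega\vert^2)}\omega=\omega_0$ does. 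Uniqueness transfers the same way: two subsonic solutions of \eqref{DHF} with the right periods have transforms that must coincide by uniqueness in \cite{SS2}. Finally, a continuous path $(\gamma(\tau),\sigma(\tau))$ in $\mathcal D$ maps to a continuous path of transformed data, so $\omega_0(\tau)$ depends continuously on $\tau$ in the uniform norm and either stays subsonic or has $\sup_M\vert\omega_0(\tau)\vert^2\to\hat Q_s$ at some $\hat\tau_s$; since $g_\eta$ is continuous, increasing, and sends $\hat Q_s$ to $Q_s$, the same dichotomy holds for $\omega(\tau)=e^{\eta(g_\eta(\vert\omega_0(\tau)\vert^2))}\omega_0(\tau)$ with $\tau_s=\hat\tau_s$.

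The main obstacle is not conceptual but a matter of careful bookkeeping: one must check that the transformed right-hand side $e^{-\zeta(x,\vert\omega_0\vert^2)}\,d*\sigma$, which depends on the unknown through $\zeta$, still lies within the class of inhomogeneities for which the perturbative argument of \cite{SS2} yields existence, uniqueness, and continuous dependence near the trivial solution, and that $\rho_0$ and $g_\eta$ retain the $\mathcal C^{1+\alpha}$ regularity required there. In particular one needs $f_\eta'\neq 0$ on the subsonic range so that $g_\eta\in\mathcal C^{1+\alpha}$; this is forced by admissibility condition (c), which presupposes that $g_\eta$ is differentiable with a definite sign. Once these points are settled, the theorem follows by composing the two directions of the correspondence of Theorem \ref{use1} (and Proposition \ref{use2}) with \cite{SS2}, Theorem 1.
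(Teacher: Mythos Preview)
Your proposal is correct and follows the same route as the paper: transform \eqref{DHF} via Theorem \ref{use1} into the conventional nonlinear Hodge Dirichlet problem with density $\rho_0$, verify that admissibility of $(\rho,\zeta,\eta)$ yields admissibility of $\rho_0$ in the sense of \cite{SS2}, invoke Theorem 1 of \cite{SS2}, and transform back via $\omega=e^{\eta(g_\eta(\vert\omega_0\vert^2))}\omega_0$. Your treatment is in fact more explicit than the paper's terse version---you correctly track the factor $e^{-\zeta}$ on the inhomogeneous term (the paper writes simply $d\ast\sigma$) and rightly flag that residual dependence on the unknown as the only point requiring care.
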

\begin{proof}
By Theorem \ref{use1}, system \eqref{DHF} is transformed into
\begin{equation}\label{DH}
\left\{\begin{array}{lll}
d* \left(\rho_0(x,\vert\omega_0\vert^2)\,\omega_0\right) = d\ast \sigma\\
d\omega_0 = 0\\
T \omega_0 = T\gamma  \mbox { on } \partial M\,,
\end{array}\right.
\end{equation}
with $\rho_0 (x, \hat t\,) = \exp[\eta(g(\hat t\,))-\zeta(x, g(\hat t\,))]\, \rho(x, g(\hat t\,))$. By Theorem \ref{use1}, $\rho_0$ is \emph{admissible} as defined in \cite{SS2}; that is, $\rho_0(x,t)\in [k, 1/k]$ and $\partial_{\hat t}\, \phi_{ \rho_0}>0$ $\forall \hat t\in (0, f_\eta (T))$. Therefore, the conclusions in Theorem 1 of \cite{SS2} extend to  \eqref{DHF}. By Theorem \ref{use1} the $1$-form $\omega = \exp[\eta\left(g(\vert\omega_0\vert^2)\right)]\,\omega_0$ is  the unique solution to \eqref{DHF} as required.
\end{proof}
 \begin{theorem}
\label{NE!}
Let  $(\rho, \zeta, \eta)$ be an admissible system of class $\mathcal {C}^{2+\alpha}$ in $x$ and $\mathcal C^{1+\alpha}$ in $t$, with sonic speed $Q_s$. There is an open connected set $\mathcal O\in \mathcal N$ containing the origin such that for each pair of $1$-forms $(\gamma,  \nu)\in \mathcal O$, there is a unique subsonic $1$-form $\omega\in \mathcal C^{1+\alpha}(\bar M)$ having the same absolute periods as $\gamma$,  satisfying
\begin{equation}\label{NHF}
\left\{\begin{array}{ll}
d* \left(\rho(x, \vert\omega\vert^2)\,\omega\right) =d\zeta \wedge * \left(\rho(Q)\omega\right) \\
d\omega = d\eta\wedge \omega\\
N (\rho \,e^{-\zeta} \omega) = N \nu \mbox { on } \partial M\,.
\end{array}\right.
\end{equation}
Moreover, for any given continuous path $(\gamma(\tau), \nu (\tau))$ on $\mathcal O$, the same conclusions as in Theorem \ref {DE!} hold for the path of solutions $\omega(\tau)$.
\end{theorem}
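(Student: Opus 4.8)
The plan is to mimic the proof of Theorem \ref{DE!}: reduce the Neumann problem \eqref{NHF} to the corresponding Neumann problem for the conventional nonlinear Hodge system \eqref{NLHe} by means of the Hodge--B\"acklund correspondence of Theorem \ref{use1}, apply the Neumann existence and uniqueness result (Theorem 2 of \cite{SS2}), and then transform back. Concretely, I would apply Theorem \ref{use1} with $\Sigma = d[\zeta(\vert\omega\vert^2)]$, $\Gamma = d[\eta(\vert\omega\vert^2)]$, $\rho = \rho(x,\vert\omega\vert^2)$: setting $\omega_0 = e^{-\eta(\vert\omega\vert^2)}\omega$ and $\rho_0(x,\hat t\,) = e^{\eta(g(\hat t\,)) - \zeta(x, g(\hat t\,))}\rho(x, g(\hat t\,))$ with $g = f_\eta^{-1}$, the first two equations of \eqref{NHF} become the two equations of \eqref{NLHe} for $\omega_0$ with density $\rho_0$. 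The point that makes the Neumann data transform cleanly is the identity $\rho_0(x,\vert\omega_0\vert^2)\,\omega_0 = e^{-\zeta(\vert\omega\vert^2)}\rho(x,\vert\omega\vert^2)\,\omega$, which follows directly from \eqref{rho0} and \eqref{w0}; hence the third line of \eqref{NHF}, $N(\rho\, e^{-\zeta}\omega) = N\nu$, is exactly the Neumann condition $N(\rho_0\omega_0) = N\nu$ for the transformed system.

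Next I would verify that $\rho_0$ is \emph{admissible} in the sense of \cite{SS2}. Conditions (a)--(c) of the admissible-system definition assert precisely that $f_\eta$ is a bijection of $\R^+\cup\{0\}$, that $\rho_0 \in [k, 1/k]$, and that $(\partial_t \phi_{\rho e^{-\zeta}})\, g'_\eta > 0$ on $(0,Q_s)$. Using the identity $\phi_{\rho_0}(x,\hat t\,) = \phi_{\rho e^{-\zeta}}(x, t)$ with $\hat t = f_\eta(t)$ established in the proof of Theorem \ref{use1}, together with the chain rule, one gets $\partial_{\hat t}\,\phi_{\rho_0} = (\partial_t \phi_{\rho e^{-\zeta}})\, g'_\eta(\hat t\,) > 0$ on $(0, f_\eta(Q_s))$, so $\rho_0$ is admissible and a $1$-form $\omega$ is subsonic for \eqref{NHF} if and only if $\omega_0$ is subsonic for \eqref{NLHe} (with sonic speed $f_\eta(Q_s)$). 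Then Theorem 2 of \cite{SS2}, applied to $\rho_0$, $M$, and the Neumann data, yields an open connected set $\mathcal O_0 \subset \mathcal N$ about the origin such that for each pair $(\gamma,\nu) \in \mathcal O_0$ there is a unique subsonic $\omega_0 \in \mathcal C^{1+\alpha}(\bar M)$ with the same absolute periods as $\gamma$ solving \eqref{NLHe} with $N(\rho_0\omega_0) = N\nu$, with continuous dependence on a path parameter $\tau$ in the uniform norm and the stated subsonic/sonic-limit dichotomy. Transforming back by $\omega = e^{\eta(g(\vert\omega_0\vert^2))}\omega_0$, a $\mathcal C^1$ conformal change since $\eta\circ g \in \mathcal C^{1+\alpha}$ by the smoothness hypotheses, produces the required subsonic solution $\omega \in \mathcal C^{1+\alpha}(\bar M)$ of \eqref{NHF}; uniqueness, the period-matching, and continuous dependence on $\tau$ transfer through the bijection $\omega \leftrightarrow \omega_0$, and one takes $\mathcal O$ to be the preimage of $\mathcal O_0$ under the invertible, continuous data transformation, which is again open, connected, and contains the origin.

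The main obstacle I anticipate is the boundary and period bookkeeping, not any new analysis. One must check that the conformal factor $e^{-\eta(\vert\omega\vert^2)}$, which up to $\partial M$ inherits the regularity forced by conditions (a)--(c), converts normal components correctly --- this is precisely why the boundary condition in \eqref{NHF} is imposed on $N(\rho\, e^{-\zeta}\omega)$ rather than on $N\omega$ --- and that the de Rham cohomology data (absolute periods) is preserved under $\omega_0 = e^{-\eta}\omega$ even though $\omega$ itself is not closed; here one uses that $\omega_0$ \emph{is} closed and that the correspondence of Theorem \ref{use1} is a genuine bijection on solution sets, so ``same absolute periods as $\gamma$'' for $\omega_0$ pulls back to the corresponding statement for $\omega$. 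A secondary point is tracking the space $\mathcal N_2$ (which is $\ker\delta$ for $n\le 3$ and $0$ for $n>3$) through the transformation, but since $\mathcal N$ is defined intrinsically on $M$ and is unaffected by the density change, this is immediate.
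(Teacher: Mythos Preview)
Your proposal is correct and follows essentially the same route as the paper: transform \eqref{NHF} via Theorem \ref{use1} into the conventional nonlinear Hodge Neumann problem for $\omega_0$ with density $\rho_0(x,\hat t\,)=e^{\eta(g(\hat t\,))-\zeta(x,g(\hat t\,))}\rho(x,g(\hat t\,))$, verify that $\rho_0$ is admissible in the sense of \cite{SS2}, apply Theorem 2 of \cite{SS2}, and pull back via $\omega=e^{\eta(g(\vert\omega_0\vert^2))}\omega_0$. Your explicit observation that $\rho_0\omega_0=e^{-\zeta}\rho\,\omega$ is exactly what makes the Neumann condition transform to $N(\rho_0\omega_0)=N\nu$; note also that because of this the data $(\gamma,\nu)$ are unchanged by the correspondence, so the set $\mathcal O$ is literally the one furnished by \cite{SS2} rather than a preimage under a nontrivial data map.
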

\begin{proof}
By Theorem \ref{use1}, system \eqref{NHF} is transformed into
\begin{equation}\label{DH}
\left\{\begin{array}{lll}
d* \left(\rho_0(x,\vert\omega_0\vert^2)\,\omega_0\right) = 0\\
d\omega_0 = 0\\
N (\rho_0\omega_0) = N\nu  \mbox { on } \partial M\,,
\end{array}\right.
\end{equation}
with $\rho_0 (x, \hat t\,) = \exp[\eta(g(\hat t\,))-\zeta(x, g(\hat t\,))] \rho(x, g(\hat t\,))$, satisfying the hypotheses of Theorem 2 of \cite{SS2}.  Again, by Theorem \ref{use1} $\omega = \exp[\eta\left(g(\vert\omega_0\vert^2)\right)]\,\omega_0$ is  the unique solution to \eqref{NHF} as required.
\end{proof}
\begin{remark}
For a linear  Frobenius condition, that is,  if $\eta= \eta(x)$, simpler versions of Theorems \ref{DE!} and \ref{NE!} hold and their proofs are a direct application of Proposition \ref{use2} to Theorems 1, 2 in \cite{SS2}. For simplicity, we have not  addressed the question on whether the surjectivity hypothesis on $f_\eta$ can be removed in Theorems \ref{DE!} and \ref{NE!}.
\end{remark}
\begin{remark} It is natural to expect that, at least in the case of the variational equation \eqref{NHEn},  Theorem \ref{use1} and Proposition \ref{use2} would lead to decomposition theorems for gradient-recursive
differential forms, mirroring the conventional nonlinear Hodge decomposition theorems.
Furthermore, the duality result of Proposition \ref{HBg} has potential importance in extending  nonlinear Hodge decomposition theorems  to include differential forms satisfying the nonlinear Hodge--Frobenius equations that are
not necessarily gradient-recursive.  Because all recursive forms of degree or co-degree $1$ are gradient-recursive, this investigation would be of special interest for applications to forms of degree $k\neq 1, n-1$.   In this  regard, we observe that the Frobenius theorem for 1-forms,  stating that 1-forms that generate a closed ideal are integrable, does not extend to $k$-forms with $k\neq 1, n-1$. \end{remark}
\textbf{Acknowledgment}. The authors are grateful to a referee for constructive criticism of an earlier draft of this paper.

\end{document}